\DeclareSymbolFont{largesymbols}{OMX}{yhex}{m}{n}
\DeclareMathAccent{\widehat}{\mathord}{largesymbols}{"62}
\newtheorem{theorem}{Theorem}[section]
\newtheorem{corollary}[theorem]{Corollary}
\newtheorem{lemma}[theorem]{Lemma}
\newtheorem{proposition}[theorem]{Proposition}
\newtheorem{question}[theorem]{Question}
\theoremstyle{definition}
\newtheorem{definition}[theorem]{Definition}
\newtheorem{remark}[theorem]{Remark}
\DeclareMathOperator{\Ima}{im}
\newcommand{\End}{{\rm End}}
\newcommand{\hX}{ \widehat{X}}
\newcommand{\tX}{\widetilde{X}}
\newcommand{\hphi}{\widehat{\varphi}}
\newcommand{\tphi}{\widetilde{\varphi}}
\newcommand{\raA}{\overrightarrow{A}}
\newcommand{\rk}{{\rm rk}}
\newcommand{\mrk}{{\rm mrk}}
\newcommand{\diam}{{\rm diam}}
\newcommand{\mdim}{{\rm mdim}}
\newcommand{\Wdim}{{\rm Wdim}}
  \newcommand{\cE}{{\mathcal E}}
 \newcommand{\bN}{{\mathbb N}}
 \newcommand{\bR}{{\mathbb R}}
 \newcommand{\bT}{{\mathbb T}}
 \newcommand{\bZ}{{\mathbb Z}}
\begin{document}

\title{Mean dimension of natural extension of algebraic systems}

\author{Bingbing Liang}
\author{Ruxi Shi}

\address{\hskip-\parindent
B.L., Department of Mathematical Science, Soochow University,
Suzhou 215006, China.}
\email{bbliang@suda.edu.cn}

\address{\hskip-\parindent
R.S., Sorbonne Universite, LPSM, 75005 Paris, France}
\email{ruxi.shi@upmc.fr}

\subjclass[2020]{Primary 37B02, 20K30.}
\keywords{mean dimension, Pontryagin dual, natural extension, cellular automaton}

\begin{abstract}
Mean dimension may decrease after taking the natural extension. In this paper we show that mean dimension is preserved by natural extension for an endomorphism on a compact metrizable  abelian group. As an application, we obtain that the mean dimension of an algebraic cellular automaton coincides with the mean dimension of its natural extension, which strengthens a result of Burguet and Shi \cite{BS21} with a different proof.
\end{abstract}

\maketitle

\section{Introduction}

By a dynamical system, we mean a pair $(X, \varphi)$ consisting of a compact metrizable space $X$ and a continuous map $\varphi \colon X \to X$. The natural extension of $(X, \varphi)$, denoted by $(\tX,\tphi)$ (see Definition \ref{natural extension}), preserves many important dynamical properties of $(X, \varphi)$.  It is well known that the topological entropy of $(X, \varphi)$ coincides with the topological entropy of $(\tX, \tphi)$. In this sense we say that topological entropy is preserved by natural extension.

Mean (topological) dimension is a dynamical invariant which can distinguish dynamical systems with infinite entropy. Gromov introduced this invariant when measuring the size of some holomorphic function spaces \cite{Gromov99T}. Mean dimension plays an important role in dynamical embedding problems  \cite {LW00, Gutman15, GLT16} and has close relations with other areas like information theory \cite{LT18} and operator algebras \cite{LL18, EN17}.  As pointed out by Burguet and Shi, the mean dimension of the natural extension $(\tX, \tphi)$ is no greater than the mean dimension of $(X, \varphi)$ and the inequality can be strict  \cite[Section 3.2]{BS21}. In fact, it is mainly due to a topological obstruction that the inverse limit of compact metrizable spaces can decrease the (covering) dimension \cite[Example 183]{IM12}.  This motivates the following question:
\begin{question} \label{main concern}
For which dynamical systems $(X, \varphi)$ is it true that  $\mdim(X, \varphi)=\mdim(\tX, \tphi)$?
\end{question}

In this paper, we answer Question \ref{main concern} when $(X, \varphi)$ admits an algebraic structure as in \cite{S95B}. That is,

\begin{theorem}\label{main thm}
For a compact metrizable abelian group $X$ and a continuous endomorphism $\varphi \colon X \to X$, we have $\mdim(X, \varphi)=\mdim(\tX, \tphi)$.
\end{theorem}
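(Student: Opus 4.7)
Since Burguet and Shi \cite{BS21} have established $\mdim(\tX, \tphi) \le \mdim(X, \varphi)$ in general, the task is to prove the reverse inequality under the algebraic hypothesis. My plan is to reduce to the case where $\varphi$ is surjective, for which the coordinate projection $\tX \to X$ is already onto and the desired bound is immediate.

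The reduction is carried out through $X_\infty := \bigcap_{n \ge 0} \varphi^n(X)$, a closed $\varphi$-invariant subgroup on which $\varphi$ restricts to a surjection $\varphi_\infty$. Every element $(x_n)_{n \ge 0} \in \tX$ satisfies $x_n = \varphi^k(x_{n+k}) \in \varphi^k(X)$ for all $k$, so $\tX = \widetilde{X_\infty}$, and since $\varphi_\infty$ is surjective the coordinate projection $\tX \twoheadrightarrow X_\infty$ is a surjective factor map, giving $\mdim(X_\infty, \varphi_\infty) \le \mdim(\tX, \tphi)$. It therefore suffices to show $\mdim(X, \varphi) \le \mdim(X_\infty, \varphi_\infty)$. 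To do this I would set $Z := X/X_\infty$ with its induced endomorphism $\bar\varphi$, and pass to Pontryagin duals: writing $M := \widehat{X}$ and $\alpha := \widehat{\varphi}$, the subgroup $T := \bigcup_{n \ge 1} \ker(\alpha^n)$ is precisely the kernel of $M \twoheadrightarrow \widehat{X_\infty}$, so $\widehat{Z} \cong T$. Dualizing $T = \varinjlim_n \ker(\alpha^n)$ gives $Z = \varprojlim_n \widehat{\ker(\alpha^n)}$, on each factor of which the induced dynamics is nilpotent. A short diagonal argument then shows that for every basic neighborhood $U_k = \ker(Z \twoheadrightarrow \widehat{\ker(\alpha^k)})$ of $0$ in $Z$ one has $\bar\varphi^n(Z) \subseteq U_k$ for all $n \ge k$; combined with continuity at the finitely many $n < k$, this yields equicontinuity of $(Z, \bar\varphi)$. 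Hence $\mdim(Z, \bar\varphi) = 0$ by the standard Lindenstrauss--Weiss computation.

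\noindent\textbf{Main obstacle.} The last step is a subadditivity inequality $\mdim(X, \varphi) \le \mdim(X_\infty, \varphi_\infty) + \mdim(Z, \bar\varphi)$ for the algebraic short exact sequence $0 \to X_\infty \to X \to Z \to 0$, which together with $\mdim(Z) = 0$ closes the chain $\mdim(X) \le \mdim(X_\infty) \le \mdim(\tX) \le \mdim(X)$ and completes the proof. This is the principal technical point: while the inequality is natural from the principal $X_\infty$-bundle structure of $X \to Z$ and standard for compact abelian Lie groups, treating arbitrary compact metrizable abelian groups requires either approximating $X$ by its Lie quotients (corresponding under Pontryagin duality to an exhaustion of $M$ by finitely generated sub-$\mathbb{Z}[t]$-modules) and passing to a limit, or translating the problem to the dual side and invoking a mean rank formula for algebraic actions (in the spirit of Li--Liang), where additivity of rank under short exact sequences of modules together with vanishing mean rank of the torsion submodule $T$ delivers the bound more directly.
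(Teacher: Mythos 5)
There is a fatal gap at the heart of your argument: you assert that because $\varphi_\infty$ is surjective, the coordinate projection $\tX \twoheadrightarrow X_\infty$ is a surjective factor map and therefore $\mdim(X_\infty, \varphi_\infty) \le \mdim(\tX, \tphi)$ is ``immediate.'' Mean dimension is \emph{not} monotone under factor maps: a factor can have strictly larger mean dimension than its extension, essentially because covering dimension can drop when passing to an inverse limit even with surjective bonding maps (this is the topological obstruction cited in the introduction, \cite[Example 183]{IM12}, and it is exactly why $\mdim(\tX,\tphi)<\mdim(X,\varphi)$ can occur for surjective $\varphi$ outside the algebraic setting, cf.\ \cite[Section 3.2]{BS21}). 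So the inequality you call immediate is precisely the content of the theorem in the surjective case, and it is where the algebraic hypothesis must enter. The paper proves it by passing to Pontryagin duals: it identifies mean dimension with the mean rank of the dual endomorphism (Theorem \ref{dual equality}), constructs an injective equivariant homomorphism $(\overrightarrow{\hX}, \overrightarrow{\hphi})\to(\widehat{\widetilde{X}}, \widehat{\tphi})$, and uses injectivity of $\hphi$ (dual to surjectivity of $\varphi$) together with Proposition \ref{mrk reduction} to get $\mrk(\hX,\hphi)=\mrk(\overrightarrow{\hX},\overrightarrow{\hphi})\le\mrk(\widehat{\widetilde{X}},\widehat{\tphi})$. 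None of this, nor any substitute for it, appears in your proposal.

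Your reduction to the surjective case is also heavier than necessary and rests on an unproved subadditivity inequality $\mdim(X,\varphi)\le\mdim(X_\infty,\varphi_\infty)+\mdim(Z,\bar\varphi)$ for the extension $X\to Z=X/X_\infty$, which you yourself flag as the main obstacle; such fibering inequalities for mean dimension are delicate and not available off the shelf for arbitrary compact metrizable abelian groups. (Your observations that $\tX=\widetilde{X_\infty}$ and that $(Z,\bar\varphi)$ is equicontinuous, hence of mean dimension zero, are correct.) The paper sidesteps all of this with a purely topological-dynamical fact: by Gutman's argument \cite[Lemma 7.2]{Gutman17} one has $\mdim(X,\varphi)=\mdim(\Omega(X),\varphi)$, and since $\Omega(X)\subseteq\bigcap_{n\ge1}\varphi^n(X)\subseteq X$, monotonicity under subsystems already gives $\mdim(X,\varphi)=\mdim(X_\infty,\varphi)$ (Lemma \ref{nonsurjective reduction}). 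I recommend you adopt that route for the reduction and, more importantly, supply an actual proof of the surjective case.
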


To prove this theorem,  taking advantage of the abelian group structure of $X$, we can instead consider the setting of an action on the Pontryagin dual space.  By introducing a proper notion of the mean rank for a general endomorphism on a discrete abelian group  (also called the rank-entropy in \cite{SZ09}), we shall prove that the mean dimension of $(X, \varphi)$ coincides with the mean rank of its Pontryagin dual. With the help of a result of Gutman \cite{Gutman17} regarding the mean dimension of non-wandering subsystems, we can finally finish the proof from the surjective case to the general case.

It is worth-mentioning that the approach of interplay between an action on a compact abelian group and its dual action on Pontryagin dual space  already appears in \cite{LL18}, where the addition formula for mean dimension of algebraic actions is hard to prove directly. However,  when transforming the problem into the algebraic setting, the discreteness of algebraic objects makes the proof accessible.

On the other hand, Burguet and Shi showed that the mean dimension of a unit cellular automaton (see Definition \ref{CA}) is preserved by natural extension \cite[Theorem 6.4]{BS21}. In general,  one may raise the following
\begin{question}\cite[Question 3.6]{BS21} \label{coincide question}
Does the mean dimension of a general cellular automaton coincide with the mean dimension of its natural extension?
\end{question}

Based on a result concerning cellular automata \cite[Lemma 7.3]{BS21}, it is implicitly proved that the mean dimension of an algebraic permutative cellular automaton  (see Definition \ref{CA}) is preserved by natural extension. The following corollary improves on this result by a different method and gives a partial answer to Question \ref{coincide question}.

\begin{corollary} \label{CA application}
For an algebraic cellular automaton $F \colon X^\bZ \to X^\bZ$, we have $\mdim(X^\bZ, F)=\mdim(\widetilde{X^\bZ}, \widetilde{F})$.
\end{corollary}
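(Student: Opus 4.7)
The plan is to derive Corollary \ref{CA application} as an immediate specialization of Theorem \ref{main thm}. Unpacking Definition \ref{CA}, an algebraic cellular automaton is built from a compact metrizable abelian group $X$, and $F \colon X^\bZ \to X^\bZ$ is required to be both a continuous group endomorphism and shift-commuting (determined by a local rule); for the present argument only the endomorphism part is relevant. Equipping $X^\bZ$ with the product topology makes it a compact metrizable abelian group, so the pair $(X^\bZ, F)$ satisfies exactly the hypotheses of Theorem \ref{main thm}.

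Applying Theorem \ref{main thm} with $(X, \varphi)$ replaced by $(X^\bZ, F)$ yields $\mdim(X^\bZ, F) = \mdim(\widetilde{X^\bZ}, \widetilde{F})$, which is precisely the desired equality. There is no genuine obstacle at this step; the entire content of the corollary rests on Theorem \ref{main thm}, whose proof via the Pontryagin dual and the associated mean rank constitutes the technical core of the paper. It is worth emphasizing what this derivation does \emph{not} use: neither permutativity nor any special structure of the local rule (as in \cite{BS21}) is invoked, which is precisely why the statement gives a new partial answer to Question \ref{coincide question} inside the algebraic category.

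One might ask whether the shift-equivariant and locally-determined nature of $F$ should enter in some essential way. In this approach it does not: cellular automata appear only as a source of continuous endomorphisms of the compact abelian group $X^\bZ$, and Theorem \ref{main thm} already handles the much broader class of all such endomorphisms. This transparency is one benefit of routing the analysis through the Pontryagin dual, where the one-dimensional lattice indexing the coordinates becomes irrelevant to the question of invariance under natural extension.
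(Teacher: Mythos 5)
Your proposal is correct and matches the paper's own proof: both simply observe that $F$ is a continuous group endomorphism of the compact metrizable abelian group $X^\bZ$ (the paper cites \cite[Lemma 7.1]{BS21} for this small verification, which you fold into ``unpacking the definition'') and then apply Theorem \ref{main thm}. No further comment is needed.
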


The paper is organized as follows. In Section 2 we recall the definitions of mean dimension and of natural extension of a dynamical system. In Section 3 we introduce the notion of mean rank for a general group endomorphism, discuss some properties, and establish an equality relating mean dimension and mean rank. In Section 4 we complete the proof of Theorem \ref{main thm} and Corollary \ref{CA application}.

\section{background}

\subsection{Mean dimension}
 Let $X$ be a compact metrizable space with a compatible metric $d$. Fix $\varepsilon > 0$. Recall that a continuous map $f \colon (X, d) \to Y$ into another topological space $Y$ is an {\it $(\varepsilon, d)$-embedding} if $$\diam (f^{-1}(y), d) < \varepsilon$$
for every $y \in Y$, where $\diam (f^{-1}(y), d)$ is the diameter of the set $f^{-1}(y)$ under the metric $d$.  Denote by $\Wdim_\varepsilon(X, d)$ the minimal dimension of a polyhedron $P$ such that there exists an $(\varepsilon, d)$-embedding $f \colon (X, d) \to P$.

The following important lemma was proved first by Gromov \cite[Section 1.11 and 1.12]{Gromov99T} and it can also be deduced by Brouwer's fixed point theorem  \cite[Proposition 4.6.5]{C15}.

\begin{lemma} \label{Wdim base}
For every $0 < \varepsilon < a$ and $n \in \bN$,  we have
$$\Wdim_\varepsilon ([0,a]^n, |\cdot|_\infty)=n.$$
\end{lemma}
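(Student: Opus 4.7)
The upper bound is immediate: the identity map $\id \colon [0,a]^n \to [0,a]^n$ is a $(0,d)$-embedding into the $n$-dimensional polyhedron $[0,a]^n$, so $\Wdim_\varepsilon([0,a]^n, |\cdot|_\infty) \leq n$ for every $\varepsilon > 0$. For the lower bound my plan is a proof by contradiction based on the metric form of the Lebesgue covering theorem for the cube: every open cover of $[0,a]^n$ by sets of $|\cdot|_\infty$-diameter strictly less than $a$ has order at least $n$ (some point lies in at least $n+1$ members). This statement is equivalent to Brouwer's fixed point theorem, matching the hint preceding the lemma.

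Suppose for contradiction that $f \colon [0,a]^n \to P$ is an $(\varepsilon,d)$-embedding with $\dim P \leq n-1$, and fix $\delta > 0$ with $\varepsilon + \delta < a$, which is possible since $\varepsilon < a$. The key technical step is to promote the pointwise fiber bound $\diam(f^{-1}(y), |\cdot|_\infty) < \varepsilon$ to a neighborhood version. For each $y \in f([0,a]^n)$ I take a compact neighborhood $U_y$ of $f^{-1}(y)$ in $[0,a]^n$ of $|\cdot|_\infty$-diameter $< \varepsilon + \delta$ (for instance, the closed $(\delta/2)$-neighborhood of $f^{-1}(y)$), and then invoke the standard upper semi-continuity of fiber preimages for continuous maps from compact spaces: the complement $f([0,a]^n \setminus \mathrm{int}\,U_y)$ is closed and misses $y$, so its complement in $P$ is an open neighborhood $V_y$ of $y$ with $f^{-1}(V_y) \subseteq U_y$. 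In particular $\diam(f^{-1}(V_y), |\cdot|_\infty) < \varepsilon + \delta$.

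By compactness of $f([0,a]^n)$ I extract a finite subcover of the $V_y$, extend trivially to a finite open cover $\cV$ of $P$ by adjoining open sets disjoint from $f([0,a]^n)$ (their $f$-preimages are empty), and use $\dim P \leq n-1$ to pass to a finite open refinement $\cV'$ of order $\leq n-1$. The pullback family $\{f^{-1}(V) : V \in \cV'\}$ is then a finite open cover of $[0,a]^n$ in which each member has $|\cdot|_\infty$-diameter $< \varepsilon + \delta < a$, while the order is preserved under pullback (since $x \in f^{-1}(V)$ if and only if $f(x) \in V$) and so is at most $n-1$. This contradicts the Lebesgue covering theorem, finishing the lower bound.

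The main obstacle is the metric Lebesgue covering theorem itself, which I plan to cite as a classical input (it is the content equivalent to Brouwer's fixed point theorem that the excerpt alludes to); the rest reduces to routine point-set verifications — the upper semi-continuity of fibers, the preservation of order under pullback, and the covering-dimension characterization for polyhedra. A purely degree-theoretic route via an approximate inverse $g \colon P \to [0,a]^n$ with $\|g \circ f - \id\|_\infty < \varepsilon$ also seems feasible, but the natural Brouwer bound only forces $g \circ f$ to cover the sub-cube $[\varepsilon, a-\varepsilon]^n$, which is empty once $\varepsilon \geq a/2$; the Lebesgue-covering route avoids this bifurcation and handles all $\varepsilon \in (0,a)$ uniformly.
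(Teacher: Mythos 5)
Your proof is correct, and it is essentially the standard argument: the paper does not prove this lemma itself but cites Gromov and \cite[Proposition 4.6.5]{C15}, where the lower bound is obtained exactly as you do, by pulling back a low-order refinement along the $(\varepsilon,d)$-embedding and contradicting the Lebesgue covering theorem for the cube (the Brouwer-equivalent input the paper alludes to). Your neighborhood upgrade of the fiber bound from $\varepsilon$ to $\varepsilon+\delta<a$ is the right way to make the pullback sets small, and the order-preservation under pullback is exactly the point that makes the contradiction work.
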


Let $\varphi \colon X \to X$ be a continuous map.  For a compatible metric $d$ on $X$ and $n \in \bN$, we obtain another compatible metric $d_n$ on $X$ defined as
$$d_n(x, x'):=\max_{0 \leq i \leq n-1} d(\varphi^i(x), \varphi^i(x')).$$
It is easy to check that the sequence $a_n:= \Wdim_\varepsilon(X, d_n)$ is {\it subadditive}  in the sense that $a_{n+m}\leq a_n+a_m$ for any $n, m  \in \bN$. Thus we obtain a well-defined quantity:
$$\mdim_\varepsilon(X, d):=\lim_{n \to \infty} \frac{\Wdim_\varepsilon(X, d_n)}{n}.$$

\begin{definition}
The {\it mean (topological) dimension} of $(X, \varphi)$ is
$$\mdim(X, \varphi):=\sup_{\varepsilon > 0} \mdim_\varepsilon(X, d).$$
\end{definition}
By compactness of $X$ it follows that the definition is independent of choices of compatible metrics.
\begin{remark}
Lindenstrauss and Weiss introduced another equivalent definition of mean (topological) dimension in terms of the overlapping number of finite open covers \cite{LW00}. This definition works for actions on general compact Hausdorff spaces. See a friendly introduction to mean dimension theory in \cite{C15}.
\end{remark}

\subsection{Inverse limit}
Let $\{X_n\}_{n=1}^\infty$ be a sequence of compact metrizable spaces with surjective continuous maps $\psi_n \colon X_{n+1} \to X_n$ for all $n \in \bN$. The {\it inverse limit of $\psi_n$'s }, denoted as
$\varprojlim X_n,$
is defined as the subset of $\prod_{n \geq 1} X_n$ consisting of all elements $(x_n)_{n\in \mathbb{N}}$ satisfying $\psi_n(x_{n+1})=x_n$ for all $n \in \bN$.

Now let $\varphi_n \colon X_n \to X_n$ be a surjective continuous map  such that $\psi_n \circ \varphi_{n+1}=\varphi_n \circ \psi_n$ for every $n \in \bN$. In other words, $\psi_n \colon (X_{n+1}, \varphi_{n+1}) \to (X_n, \varphi_n)$'s is a sequence of factor maps of dynamical systems $(X_n, \varphi_n)$'s. Define $\Phi \colon  \varprojlim X_n \to \varprojlim X_n$ by sending $(x_n)_{n\in \mathbb{N}}$ to $(\varphi_n(x_n))_{n\in \mathbb{N}}$.

\begin{definition}
The dynamical system  $(\varprojlim X_n, \Phi)$ is called the {\it inverse limit of dynamical systems} $(X_n, \varphi_n)$'s.
\end{definition}

\begin{definition}\label{natural extension}
	For a dynamical system $(X, \varphi)$, setting $X_n =\cap_{k \geq 1} \varphi^k(X)$ and $\psi_n =\varphi_n =\varphi$ for all $n \in \bN$, the induced inverse limit is called the {\it natural extension} of $(X, \varphi)$, which we denote by $(\widetilde{X}, \widetilde{\varphi})$.
\end{definition}

The following proposition gives an upper bound of mean dimension for an inverse limit of dynamical systems \cite[Proposition 5.8]{Shi21}.
\begin{proposition}  \label{inverse decrease}
For an inverse limit of dynamical systems $(X_n, \varphi_n)$'s, we have
$$\mdim(\varprojlim X_n, \varprojlim \varphi_n) \leq \liminf_{n \to \infty} \mdim(X_n, \varphi_n).$$
In particular, for a dynamical system $(X, \varphi)$, we have
$$\mdim(\widetilde{X}, \widetilde{\varphi}) \leq \mdim(X, \varphi).$$
\end{proposition}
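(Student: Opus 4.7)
The plan is to pull back $(\varepsilon,\cdot)$-embeddings from each factor $X_n$ to the inverse limit $Y := \varprojlim X_n$ via the canonical projection $\pi_n \colon Y \to X_n$, which is a factor map from $(Y,\Phi)$ to $(X_n,\varphi_n)$. The crux is that although each individual $\pi_n$ collapses a lot of information, the entire family $\{\pi_n\}_{n \in \bN}$ separates points of $Y$, and by compactness this separation is quantitatively uniform once $n$ is large.

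First I would fix compatible metrics $d^{(n)}$ on each $X_n$, rescaled so that
\[
d_Y(y,y') := \sum_{n \geq 1} d^{(n)}(\pi_n y,\, \pi_n y')
\]
defines a compatible metric on $Y$. The key lemma to establish is: for every $\varepsilon > 0$ there exist $N \in \bN$ and $\delta > 0$ such that for all $n \geq N$,
\[
d^{(n)}(\pi_n y,\, \pi_n y') < \delta \ \Longrightarrow \ d_Y(y,y') < \varepsilon.
\]
This should follow by a standard compactness/contradiction argument: otherwise, sequential compactness of $Y \times Y$ yields distinct $y \neq y'$ in $Y$ whose projections agree on arbitrarily large indices, contradicting the injectivity of $y \mapsto (\pi_n y)_n$.

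Since $\pi_n$ intertwines $\Phi$ and $\varphi_n$, the implication propagates to the $k$-step Bowen metrics: $d^{(n)}_k(\pi_n y,\, \pi_n y') < \delta$ forces $d_{Y,k}(y,y') < \varepsilon$ for every $k$. Therefore, composing an $(\delta, d^{(n)}_k)$-embedding $f \colon X_n \to P$ with $\pi_n$ produces an $(\varepsilon, d_{Y,k})$-embedding $f \circ \pi_n \colon Y \to P$, giving
\[
\Wdim_\varepsilon(Y, d_{Y,k}) \;\leq\; \Wdim_\delta(X_n, d^{(n)}_k).
\]
Dividing by $k$, letting $k \to \infty$, then taking $\liminf$ over $n \geq N$ and finally $\sup$ over $\varepsilon$ yields $\mdim(Y,\Phi) \leq \liminf_n \mdim(X_n,\varphi_n)$.

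For the natural extension, each $X_n = \bigcap_{k \geq 1}\varphi^k(X)$ is a $\varphi$-invariant closed subset on which $\varphi$ restricts to a surjection, so it is a subsystem of $(X,\varphi)$; since any $(\varepsilon,d_k)$-embedding of $X$ restricts to one of $X_n$, we have $\mdim(X_n,\varphi|_{X_n}) \leq \mdim(X,\varphi)$, and the first part closes the argument. I expect the quantitative separation lemma above to be the main technical ingredient; everything downstream is formal bookkeeping with $\Wdim$, in complete analogy with the classical behaviour of topological entropy under inverse limits.
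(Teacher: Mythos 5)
Your overall architecture --- pulling back $(\delta, d^{(n)}_k)$-embeddings of $X_n$ through $\pi_n$ to obtain $(\varepsilon, d_{Y,k})$-embeddings of $Y$ --- is the standard and correct one (the paper itself gives no proof, citing \cite[Proposition 5.8]{Shi21}). However, your key separation lemma is false as stated, and the compactness argument you propose for it does not work. The lemma demands a single $\delta>0$ valid for \emph{all} $n\ge N$, but the modulus of continuity of the composite bonding maps $\psi_{m}\circ\cdots\circ\psi_{n-1}\colon X_n\to X_m$ can degenerate as $n\to\infty$. Concretely, take $X_n=[0,1]$ with $\psi_n$ the tent map and $\varphi_n=\mathrm{id}$: for any $\delta>0$ and any sufficiently large $n$ there are points $y,y'$ in the inverse limit with $|\pi_n y-\pi_n y'|=2^{-n}<\delta$ but $|\pi_1 y-\pi_1 y'|=1/2$, so $d_Y(y,y')$ stays bounded below independently of $n$. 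Your contradiction argument breaks at exactly this point: negating the lemma produces pairs $(y_k,y_k')$ whose projections nearly agree only at indices $n_k\to\infty$, so after passing to subsequential limits you cannot conclude that the limit points' projections agree at any \emph{fixed} index, and no contradiction with the injectivity of $y\mapsto(\pi_n y)_n$ is reached.

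The good news is that the uniformity you asked for is not needed. What suffices (and is true) is: for every $\varepsilon>0$ there is $N$, chosen so that the tail $\sum_{m>N}$ of the series defining $d_Y$ contributes less than $\varepsilon/2$, such that for each \emph{fixed} $n\ge N$ there is $\delta_n>0$ with $d^{(n)}(\pi_n y,\pi_n y')<\delta_n\Rightarrow d_Y(y,y')<\varepsilon$. This follows from uniform continuity of the finitely many maps $\psi_{m}\circ\cdots\circ\psi_{n-1}$ for $m\le n$, while the tail bound takes care of the coordinates $m>n$, which are not determined by $\pi_n$ at all --- another point your lemma glosses over, since closeness of $\pi_n y$ and $\pi_n y'$ says nothing about $\pi_m y$ and $\pi_m y'$ for $m>n$. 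Since $\mdim_{\delta}(X_n,d^{(n)})\le\mdim(X_n,\varphi_n)$ for \emph{every} $\delta>0$, the dependence of $\delta_n$ on $n$ is harmless, and your concluding chain of inequalities then gives $\mdim_\varepsilon(Y,d_Y)\le\mdim(X_n,\varphi_n)$ for all $n\ge N(\varepsilon)$, hence the $\liminf$ bound after letting $\varepsilon\to 0$. The ``in particular'' part of your argument, via restriction of embeddings to the subsystem $\cap_{k\ge1}\varphi^k(X)$, is fine.
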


\subsection{Cellular automata}
Let us recall the definition of cellular automata \cite{CC10B, LM95B}.

\begin{definition} \label{CA}
For a compact metric space $X$ and a continuous map $f \colon X^I \to X$ with  a finite set $I \subseteq \bZ$,  the {\it cellular automaton} on $X^\bZ$ with local rule $f$ is the continuous map $F \colon X^\bZ \to X^\bZ$ sending $(x_n)_{n \in \bZ}$ to $(f((x_{n+j})_{j \in I}))_n$. We say that $F$ is {\it algebraic} if $X$ is a compact metric abelian group and $f$ is a group homomorphism. We call that $F$ is a {\it unit cellular automaton} if $I=\{1\}$.

\end{definition}

For the computation of mean dimension of cellular automata, we refer to \cite{BS21}.

\section{Mean rank and mean dimension}

In this section,  we consider the notion of mean rank for an endomorphism on a discrete abelian group,  discuss some useful properties, and establish an equality relating mean rank with mean dimension.

For a discrete abelian group $A$ denote by $\End(A)$ the ring of all endomorphisms on $A$. Fix $\varphi \in \End(A)$. For each finite subset $E$ of $A$, consider the subgroup of $A$ generated by $\{\varphi^i(E): i=0, \cdots, n-1\}$, which we denote by $T_n(E, \varphi)$. We call $T_n(E, \varphi)$ the {\it $n$-trajectory of $E$}. For a subgroup $B$ of $A$, we denote by $\rk(B)$ the rank of $B$. Since the sequence $\{\rk(T_n(E, \varphi))\}_{n\in \mathbb{N}}$ is subadditive, the limit $\lim_{n \to \infty} \frac{\rk(T_n(E, \varphi))}{n}$ exists, which we denote by $\mrk(E, \varphi)$.

\begin{definition}
	For  a discrete abelian group $A$ and $\varphi \colon A \to A$ a group homomorphism,
the {\it mean rank} of $A$ is defined as
$$\mrk(A, \varphi):=\sup_E \mrk(E, \varphi)$$
for $E$ ranging over all finite nonempty subsets of $A$.
\end{definition}

The following proposition whose proof is trivial shows that mean rank is upper continuous.
\begin{proposition} \label{upper continuous}
Let $\varphi \in \End(A)$ on a discrete abelian group $A$ and $\{A_n\}_{n \in \bN}$ an increasing sequence of subgroups of $A$ such that $\varphi (A_n) \subseteq A_n$ with the union $\cup_{n \in \bN} A_n=A$. Then $\mrk(A, \varphi)=\sup_{n \in \bN} \mrk(A_n, \varphi)$.
\end{proposition}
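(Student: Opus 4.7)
The plan is to verify the two inequalities separately, both of which reduce to the observation that the $k$-trajectory $T_k(E, \varphi)$ of a finite set $E$ is computed purely from the values $\varphi^i(E)$ and does not depend on which ambient group containing $E$ we view it inside.

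First I would establish the easy direction $\sup_n \mrk(A_n, \varphi) \leq \mrk(A, \varphi)$. Since $\varphi(A_n) \subseteq A_n$, the restriction $\varphi|_{A_n}$ is a well-defined endomorphism and for any finite $E \subseteq A_n$ we have $T_k(E, \varphi|_{A_n}) = T_k(E, \varphi)$ as subgroups of $A$. The rank of this subgroup is intrinsic, so $\mrk(E, \varphi|_{A_n}) = \mrk(E, \varphi)$. Taking the sup over finite $E \subseteq A_n$ yields $\mrk(A_n, \varphi) \leq \mrk(A, \varphi)$, and then the sup over $n$.

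For the reverse inequality $\mrk(A, \varphi) \leq \sup_n \mrk(A_n, \varphi)$, I would use that $A = \bigcup_n A_n$ with the $A_n$ increasing. Given any finite nonempty $E \subseteq A$, finiteness of $E$ together with the ascending union property produces some $N$ with $E \subseteq A_N$. By the previous paragraph, $\mrk(E, \varphi) = \mrk(E, \varphi|_{A_N}) \leq \mrk(A_N, \varphi) \leq \sup_n \mrk(A_n, \varphi)$. Taking the supremum over all finite nonempty $E \subseteq A$ finishes the proof.

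There is really no main obstacle here: everything comes down to the fact that the $n$-trajectories and their ranks are absolute invariants of the finite seed $E$ and the restricted map, which is exactly why the author labels the proof trivial. The only step warranting explicit mention is the ``finite set meets some $A_N$'' observation, which uses both the finiteness of $E$ and the fact that the chain $\{A_n\}$ is increasing (without monotonicity, one would only obtain that $E$ is contained in a finite union of the $A_n$, not in a single one).
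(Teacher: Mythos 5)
Your proof is correct and is exactly the routine argument the paper has in mind when it declares the proposition's proof trivial (the paper supplies no written proof). Both directions are handled properly, including the one point worth noting: that a finite subset of the increasing union lies in a single $A_N$.
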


Let $\{A_n\}_{n=1}^\infty$ be a sequence of discrete abelian groups  $\varphi_n \in \End(A_n)$ for every $n \in \bN$.  Suppose that $\{\psi_{n, m} \colon A_n \to A_m \}$ is a directed system of  discrete abelian groups $A_n$'s such that $
\psi_{n,m} \circ \varphi_n =\varphi_m \circ \psi_{n, m}$ for all $n,m \in \bN$ with $n \leq m$.
Denote by $\lambda_n$ the natural embedding $A_n \to \oplus_{n \geq 1}A_n$.  Then an explicit construction of the colimit of $\{\psi_{n,m} \colon A_n \to A_m \}$
is
$$\varinjlim A_n:=(\oplus_{n \geq 1}A_n)/S$$
where $S$ is the subgroup of $\oplus_{n \geq 1}A$ generated by all elements $$\lambda_m \circ \psi_{n,m}(a_n)-\lambda_n(a_n)$$
with $n \leq m$ and $a_n \in A_n$ (see \cite[Proposition B-7.7]{Rotman15}).

The compatible $\varphi_n$'s then induce a homomorphism $\Phi \colon \varinjlim A_n \to \varinjlim A_n$
sending $\overline{(a_n)_n}$ to $\overline{(\varphi_n(a_n))_n}$.
In particular, if $A_n=A$ and $\psi_{n,m}=\varphi_n=\varphi$ for all $n, m \in \bN$, we denote by $\raA$ the colimit and $\overrightarrow{\varphi}$  the corresponding homomorphism on $\raA$.

\begin{remark}
Denote by $\bZ[x]$ the polynomial ring with integral coefficients and $\bZ[x^\pm]$ the Laurent polynomial ring correspondingly.  Fix $\varphi \in \End(A)$ on a discrete abelian group $A$. We can view $A$ as a right $\bZ[x]$-module via $a.x^n:=\varphi^n(a)$ for every $a \in A$ and $n \in \bN$. Define $$\varphi\otimes{\rm id} \colon A\otimes_{\bZ[x]} \bZ[x^\pm] \to A\otimes_{\bZ[x]} \bZ[x^\pm]$$ by sending $a\otimes x^k$ to $\varphi(a)\otimes x^k$. Then the map
$$\pi \colon (\raA, \overrightarrow{\varphi}) \to (A\otimes_{\bZ[x]} \bZ[x^\pm], \varphi\otimes {\rm id})$$
sending $\overline{(a_n)_n}$ to $\sum_{n=1}^\infty a_n\otimes x^{-n}$ is a group isomorphism such that $\pi \circ \overrightarrow{\varphi} = (\varphi \otimes {\rm id}) \circ \pi$. Under this isomorphism, we may view $(\raA, \overrightarrow{\varphi})$ as a localization of $(A, \varphi)$.
\end{remark}

Adapting \cite[Proposition 3.5]{CT15} into our situation, we obtain the following.

\begin{proposition} \label{mrk reduction}
 Let $\{\psi_{n, m} \colon A_n \to A_m \}$ be a  directed system of discrete abelian groups  and $\varphi_n \in \End(A_n)$ such that $\psi_{n,m} \circ \varphi_n =\varphi_m \circ \psi_{n, m}$ for all $n,m \in \bN$ with $n \leq m$. Denote by $A_n'$ the quotient group $A_n/\cup_{m \geq n}\ker (A_n \stackrel{\psi_{n, m}}{\to} A_m)$ and $\varphi_n'$ the map $A_n' \to A_n'$ sending $\overline{x}$ to $\overline{\varphi_n(x)}$. Then we have
$$\mrk(\varinjlim A_n, \Phi)=\sup_{n \geq 1} \mrk(A_n', \varphi_n').$$
In particular, if $\varphi \in \End(A)$ on a discrete abelian group $A$ is injective, then $\mrk(A, \varphi)=\mrk(\overrightarrow{A}, \overrightarrow{\varphi})$.
\end{proposition}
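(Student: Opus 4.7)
The plan is to realize $\varinjlim A_n$ as an increasing union of $\Phi$-invariant subgroups that are canonically isomorphic (as abelian groups equipped with an endomorphism) to $(A_n', \varphi_n')$, and then to invoke the upper continuity of mean rank established in Proposition \ref{upper continuous}.

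First, I consider the canonical map $\iota_n \colon A_n \to \varinjlim A_n$ obtained as the composition of $\lambda_n$ with the quotient by $S$. A standard computation with filtered colimits of abelian groups, carried out directly from the presentation $\varinjlim A_n = (\oplus_k A_k)/S$ recalled in the excerpt, identifies
\[\ker(\iota_n) = \bigcup_{m \geq n} \ker(\psi_{n,m}).\]
The subgroup on the right is $\varphi_n$-invariant, since $\psi_{n,m} \circ \varphi_n = \varphi_m \circ \psi_{n,m}$ forces $\varphi_n$ to send $\ker(\psi_{n,m})$ into itself. Therefore $\varphi_n$ descends to a well-defined endomorphism $\varphi_n'$ of $A_n'$, and $\iota_n$ descends to an injection $\bar{\iota}_n \colon A_n' \hookrightarrow \varinjlim A_n$ intertwining $\varphi_n'$ and $\Phi$.

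Next I verify that the images $B_n := \bar{\iota}_n(A_n')$ form an increasing chain of $\Phi$-invariant subgroups whose union is all of $\varinjlim A_n$. The defining relations of $S$ give $\iota_n = \iota_m \circ \psi_{n,m}$ for $n \leq m$, so $B_n \subseteq B_m$; the commutation with the $\varphi$'s yields $\Phi(B_n) \subseteq B_n$; and the routine observation that every class in $(\oplus_k A_k)/S$ is represented by some $\lambda_N(a_N)$ (obtained by pushing all components to a common large index via the $\psi$'s) gives $\bigcup_n B_n = \varinjlim A_n$. Applying Proposition \ref{upper continuous} and using that $\bar{\iota}_n$ is an isomorphism $A_n' \cong B_n$ of groups with endomorphism, I obtain
\[\mrk(\varinjlim A_n, \Phi) = \sup_{n \geq 1} \mrk(B_n, \Phi|_{B_n}) = \sup_{n \geq 1} \mrk(A_n', \varphi_n'),\]
which is the main assertion.

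For the particular injective case, I specialize to $A_n = A$, $\psi_{n,m} = \varphi^{m-n}$, $\varphi_n = \varphi$; injectivity of $\varphi$ forces $\ker(\psi_{n,m}) = 0$ for all $m \geq n$, so $A_n' = A$ and $\varphi_n' = \varphi$, and the supremum formula specializes to $\mrk(\raA, \overrightarrow{\varphi}) = \mrk(A, \varphi)$. The only ingredient that is not essentially formal in this argument is the identification of $\ker(\iota_n)$ with the union of kernels; this is a standard lemma about filtered colimits of abelian groups, and I expect it to be the main thing to verify explicitly from the definition of $S$. Everything else follows by chasing the definitions and applying the already-established upper continuity.
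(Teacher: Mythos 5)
Your proposal is correct and follows essentially the same route as the paper: both realize $\varinjlim A_n$ as the increasing union of the $\Phi$-invariant subgroups $\iota_n(A_n')$ and conclude by the upper continuity of mean rank (Proposition \ref{upper continuous}), then specialize to the injective case. The only difference is that you spell out the standard filtered-colimit identification $\ker(\iota_n)=\bigcup_{m\geq n}\ker(\psi_{n,m})$ that the paper leaves implicit when asserting injectivity of $\iota_n$ on $A_n'$.
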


\begin{proof}
Consider the injective homomorphism $\iota_n \colon A_n' \to \varinjlim A_n$ by sending $\overline{x}$ to $\overline{\lambda_n(x)}$. We have that $\iota_n\circ \varphi_n=\Phi\circ \iota_n$  and $\iota_n(A_n')$ increases with $\cup_{n \geq 1} \iota_n(A_n')= \varinjlim A_n$. By Proposition \ref{upper continuous}, we have that $\mrk(A_n', \varphi_n')$ increases to $\mrk(\varinjlim A_n, \Phi)$.

On the other hand, for a single injective homomorphism $\varphi \colon A \to A$, we have $A_n'=A$ and $\varphi_n'=\varphi$. Thus
$$\mrk(\overrightarrow{A}, \overrightarrow{\varphi})=\sup_{n \geq 1} \mrk(A, \varphi) =\mrk(A, \varphi).$$
\end{proof}

Now we consider a compact metrizable abelian group $X$. The {\it Pontryagin dual} of $X$, denoted as $\hX$, is defined as the collection of continuous group homomorphisms from $X$ to the unit circle $\bT$. Under the pointwise multiplication and compact-open topology $\hX$ is a discrete abelian group.  The classical Pontryagin duality says that the Pontryagin dual of $\hX$ is isomorphic to $X$ as topological groups \cite{HR63}.

Let $\varphi \colon X \to X$ be a continuous group homomorphism. Then $\varphi$ induces a group homomorphsim $\widehat{\varphi} \colon \hX \to \hX$ sending $\chi$ to $\chi\circ \varphi$. For convenience, we may write
$$\langle \widehat{\varphi}(\chi), x\rangle=\langle \chi, \varphi(x)\rangle=\chi(\varphi(x))$$
for every $\chi \in \hX$ and $x \in X$.

The following theorem establishes a connection between mean dimension and mean rank for a general endomorphism (not necessarily invertible). The proof for an automorphism is a special case of \cite[Theorem 4.1]{LL18}.  We adapt the proof of \cite[Theorem 4.1]{LL18} into our setting.
\begin{theorem} \label{dual equality}
Let $X$ be a compact metrizable group and $\varphi \colon X \to X$  a  continuous endomorphism. Then $\mdim(X, \varphi)=\mrk(\widehat{X}, \widehat{\varphi})$.
\end{theorem}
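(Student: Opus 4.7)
The plan is to prove the equality via Pontryagin duality, establishing the two inequalities $\mdim(X,\varphi) \le \mrk(\hX,\hphi)$ and $\mdim(X,\varphi) \ge \mrk(\hX,\hphi)$ separately. Fix a translation-invariant compatible metric $d$ on $X$, which exists by the Birkhoff--Kakutani theorem. The central correspondence is: for a finite $E \subset \hX$, the annihilator $E^\perp := \bigcap_{\chi \in E}\ker\chi$ is a closed subgroup of $X$, and the identity $\chi \circ \varphi^i = \hphi^i(\chi)$ yields
\[
K_n := \bigcap_{i=0}^{n-1}\varphi^{-i}(E^\perp) = T_n(E,\hphi)^\perp,
\]
so that $X/K_n \cong \widehat{T_n(E,\hphi)}$ is a compact abelian Lie group of topological dimension $r_n := \rk T_n(E,\hphi)$, hence triangulable as a polyhedron of that dimension.

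For the upper bound: given $\varepsilon > 0$, by compactness of $X$ and the fact that continuous characters separate points, choose a finite $E \subset \hX$ with $E^\perp \subseteq \{y : d(y,e_X) < \varepsilon/2\}$. Translation-invariance of $d$ forces every coset of $K_n$ to have $d_n$-diameter strictly less than $\varepsilon$, since $x'-x \in K_n$ implies $\varphi^i(x'-x) \in E^\perp$ for $0 \le i < n$. Thus the composition of the quotient $X \to X/K_n$ with a triangulation of $X/K_n$ is an $(\varepsilon, d_n)$-embedding into a polyhedron of dimension $r_n$, giving $\Wdim_\varepsilon(X, d_n) \le r_n$. Dividing by $n$, letting $n \to \infty$, then taking $\sup$ over $\varepsilon$ and $E$ yields $\mdim(X,\varphi) \le \mrk(\hX,\hphi)$.

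For the lower bound: fix a finite $E \subset \hX$; it suffices to show $\mrk(E,\hphi) \le \mdim(X,\varphi)$. Adapting the strategy of \cite[Theorem 4.1]{LL18}, we produce for each $n$ a subset of $X$ on which $d_n$ resembles the sup metric on a cube $[0,a]^{r_n}$ and then invoke Lemma \ref{Wdim base}. Concretely, the identity component of $X/K_n$ is the torus $\bT^{r_n}$; pick a small coordinate chart $C_n \subset \bT^{r_n}$ homeomorphic to $[0,a]^{r_n}$ with $a > 0$ chosen uniformly in $n$, and construct a continuous section $s_n : C_n \to X$ of the quotient $X \to X/K_n$ using the existence of local sections for the quotient of a compact metric group by a closed subgroup. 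Translation-invariance of $d$ together with equicontinuity of the finite family of characters in $E$ then allow us to arrange that the pullback of $d_n$ under $s_n$ is bi-Lipschitz equivalent to $|\cdot|_\infty$ on $[0,a]^{r_n}$ with constants $c_1, c_2 > 0$ independent of $n$. Restricting any $(\varepsilon, d_n)$-embedding of $X$ via $s_n$ produces, for all $\varepsilon < a c_1$, an $(\varepsilon/c_1, |\cdot|_\infty)$-embedding of $[0,a]^{r_n}$, forcing $\Wdim_\varepsilon(X, d_n) \ge r_n$ by Lemma \ref{Wdim base}. Dividing by $n$ and passing to the limit yields $\mdim_\varepsilon(X,d) \ge \mrk(E,\hphi)$ for all sufficiently small $\varepsilon$, and the supremum over $E$ gives the lower bound.

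The principal obstacle lies in this lower bound, specifically in controlling the bi-Lipschitz constants uniformly in $n$: the quotient metric on $X/K_n$ depends on $n$, and naive constructions give constants deteriorating with $n$ that would spoil the rate $r_n/n \to \mrk(E,\hphi)$. This is the technical heart of \cite[Theorem 4.1]{LL18}. The only substantive change in our setting is the removal of invertibility of $\varphi$; since $T_n(E,\hphi)$ involves only forward iterates of $\hphi$, the duality arguments go through unchanged, and although $K_n$ is not $\varphi$-invariant in general (only $\varphi(K_n) \subseteq K_{n-1}$), this is harmless because $X/K_n$ enters only as a fixed-$n$ target for the embedding, not as a quotient dynamical system.
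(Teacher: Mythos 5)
Your upper bound is correct and is essentially the paper's argument in different clothing: your $K_n=T_n(E,\hphi)^\perp$ is exactly the kernel of the paper's map $\pi_n\colon X\to\bT^{E\times n}$, and $X/K_n\cong\widehat{T_n(E,\hphi)}$ together with Pontryagin's theorem $\dim=\rk$ gives $\Wdim_\varepsilon(X,d_n)\le\rk T_n(E,\hphi)$ just as in the paper. The lower bound, however, contains a genuine gap, and it sits precisely at the point you flag as ``the technical heart'': you assert that translation-invariance of $d$ plus \emph{equicontinuity} of the finitely many characters in $E$ lets you arrange a bi-Lipschitz comparison between the pullback of $d_n$ under a section $s_n$ and $|\cdot|_\infty$ on $[0,a]^{r_n}$, with constants independent of $n$. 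Equicontinuity cannot do this: for an arbitrary compatible metric $d$ it only gives a modulus of continuity, not the linear lower bound $d(u,v)\ge c_1\,\vartheta(\langle a,u\rangle,\langle a,v\rangle)$ that you need to hold for every $a\in E$ and every pair $u,v$ (this is what propagates through the iterates $\varphi^i$ to give $d_n(s_n(x),s_n(y))\ge c_1\|x-y\|_\infty$). Moreover, the two-sided (bi-Lipschitz) claim is both unnecessary and generally false: $d_n$ involves $\max_{i<n}d(\varphi^i\cdot,\varphi^i\cdot)$ and the forward iterates of $\varphi$ can expand distances without bound, so no upper Lipschitz constant uniform in $n$ exists in general. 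Only the lower estimate is needed, since an $(\varepsilon,d_n)$-embedding of $X$ restricted along $s_n$ then has fibers of small $\|\cdot\|_\infty$-diameter.

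The way the paper closes exactly this gap is to \emph{choose} the compatible metric rather than work with an arbitrary one (legitimate because mean dimension is metric-independent): it sets $d(\chi,\chi')=\max_{a\in A}\vartheta(\chi(a),\chi'(a))+\max_{j>|A|}2^{-j}\vartheta(\chi(a_j),\chi'(a_j))$, so that evaluation at each $a\in A$ is $1$-Lipschitz with no damping factor and no dependence on $n$. It then builds the cube map $f\colon[0,1/2]^{\cE}\to X$ algebraically (via a maximal subgroup $H$ with $H\cap\langle\cE\rangle=0$, rather than via local sections of the bundle $X\to X/K_n$) so that $f(x)(e)=x_e$ exactly for each $e\in\cE\subseteq\bigcup_{i<n}\hphi^i(A)$; the one-sided inequality $\|x-x'\|_\infty\le d_n(f(x),f(x'))$ is then immediate, and Lemma \ref{Wdim base} finishes the bound. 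So your skeleton is right and your observation that non-invertibility is harmless is also right (only forward iterates enter $T_n$), but as written the proposal defers the one step that actually requires work to \cite{LL18} while sketching a mechanism that would not deliver the uniform constant; to repair it, replace ``equicontinuity'' by an explicit $A$-adapted metric and drop the upper Lipschitz claim.
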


\begin{proof}
 Let us first estimate the upper bound of mean dimension. Fix a compatible metric $d$ on $X$. We may consider $X$ as the Pontryagin dual of $\hX$. Then for any $\varepsilon > 0$, there exists a finite subset $A$ of $\hX$  such that whenever $x|_A=x'|_A$ for $x, x' \in X$ we have $d(x, x') < \varepsilon$.

 Fix $n \in \bN$ and consider a continuous map $\pi_n \colon X \to \bT^{A \times n}$ by defining
 $$(\pi_n(x))(a, i)=<a, \varphi^i(x)>$$
 for every $x \in X, a \in A$, and $i=0, 1, \cdots, n-1$. Clearly by the choice of $A$ we have that
 $\pi_n$ is an  $(\varepsilon, d_n)$-embedding and hence
$$\Wdim_\varepsilon(X, d_n) \leq \dim (\Ima(\pi_n)).$$

To compute $\dim (\Ima(\pi_n))$, by a Pontryagin's result \cite[Theorem 24.28]{HR63}, we first have  $\dim (\Ima(\pi_n))=\rk(\widehat{\Ima(\pi_n)})$.
Consider the surjective homomorphism $\psi_n \colon X \to \Ima(\pi_n)$ sending $x$ to $\pi_n (x)$. By Pontryagin duality, the map $\widehat{\psi_n} \colon \widehat{\Ima(\pi_n)} \to \hX$ is injective and hence $$\rk(\widehat{\Ima(\pi_n)})=\rk(\Ima(\widehat{\psi_n}))=\rk(\Ima(\widehat{\pi_n})).$$
By the definition of $\pi_n$, we have that $\widehat{\pi_n}$ sends $(\lambda_{a, i})_{a \in A, 0\leq i < n}$ to $\sum_{a \in A, 0\leq i < n} \lambda_{a,i} \widehat{\varphi}^i(a)$, where $\lambda_{a, i} \in \bZ=\widehat{\bT}$ for all $a \in A$ and $0 \leq i < n$. Thus  $\Ima(\widehat{\pi_n})=T_n(A, \widehat{\varphi})$. Therefore,
$$\Wdim_\varepsilon(X, d_n) \leq \dim (\Ima(\pi_n))=\rk(T_n(A, \widehat{\varphi})).$$
Letting $n \to \infty$ and then  $\varepsilon \to 0$, we obtain $\mdim(X, \varphi) \leq \mrk(\hX, \widehat{\varphi})$.

Now we estimate the lower bound of mean dimension. Fix a finite subset $A \subseteq \hX$ and  $n \in \bN$. Let $\cE \subseteq \cup_{i=0}^{n-1}\widehat{\varphi}^i(A)$ be a maximal linearly independent subset and hence $|\cE|=\rk(T_n(A, \widehat{\varphi}))$. Then it suffices to show that for every $0 < \varepsilon < 1/2$ and $n \in \bN$, one has
$$\rk(T_n(A, \widehat{\varphi}))\leq \Wdim_\varepsilon(X, d_n).$$
Denote by $\langle\cE\rangle$ the abelian subgroup generated by $\cE$. By Zorn's Lemma there exists a maximal subgroup $H$ of $\hX$ subject to the condition that $H \cap \langle\cE\rangle=0$. Then for any $u \in \hX$ there exists $k \in \bZ$ such that $ku \in H \oplus \langle\cE\rangle$ (Otherwise we will get a strictly larger subgroup $H\oplus \bZ u$). Write $ku$ as
$$ku=\sum_{e \in \cE} \lambda_e e +v$$
with $v \in H$ and $\lambda_e \in \bZ$.
Based on such a decomposition and identifying $X$ with the Pontryagin dual of $\hX$, we can define a continuous map $f \colon ([0, 1/2]^\cE, ||\cdot||_\infty) \to X$ sending $x=(x_e)_{e \in \cE}$ to $f(x)$, where for each $u \in \hX$,  the image $f(x)(u)$ is defined as
$$f(x)(u)=\sum_{e \in \cE} \frac{\lambda_e}{k}x_e +\bZ \in \bT=\bR/\bZ.$$
   It is straightforward to check that $f$ is well-defined, continuous, and injective.

Now we  introduce a compatible metric $d$ on $X$ such that for every $x,x' \in [0, 1/2]^\cE$ we have
$$||x-x'||_\infty \leq d_n(f(x), f(x')).$$
Therefore, by Lemma \ref{Wdim base}, for every $0 < \varepsilon < 1/2$ we have
$$\rk(T_n(A, \widehat{\varphi}))=|\cE|=\Wdim_\varepsilon([0, 1/2]^\cE, ||\cdot||_\infty) \leq \Wdim_\varepsilon(X, d_n).$$
Enumerate the elements of $\hX$ as $a_1, a_2, \cdots, $ such that $A=\{a_1, \cdots, a_{|A|}\}$. Let $\vartheta$ be the compatible metric on $\bT$ given by
$$\vartheta(x+n, y+m)=\min_{k \in \bZ} |x-y+k|.$$
Then the desired metric $d$  on $X$ can be defined as
$$d(\chi, \chi'):=\max_{a \in A} \vartheta(\chi(a), \chi'(a))+\max_{j > |A|} \frac{1}{2^j}\vartheta(\chi(a_j), \chi'(a_j)).$$
We then complete the proof.
\end{proof}

\begin{remark}
In light of \cite[Definition 4.2]{DFG20}, we can extend the notion of mean rank to the actions by amenable semigroups and expect the above connection holds.
\end{remark}

\section{mean dimension of the natural extension}

In this section we give the proof of Theorem \ref{main thm} and Corollary \ref{CA application}. For a dynamical system $(X, \varphi)$, recall that a point $x \in X$ is called {\it non-wandering} if for every open set $U$ containing $x$ and $N \geq 1$, there exists  $k \geq N$ such that $U \cap \varphi^{-k}(U) \neq \emptyset$. Denote by $\Omega(X)$ the collection of non-wandering points of $(X, \varphi)$.
\begin{lemma} \label{nonsurjective reduction}
For a dynamical system $(X, \varphi)$, we have $\mdim(X, \varphi)=\mdim(\cap_{n \geq 1} \varphi^n(X), \varphi)$.
\end{lemma}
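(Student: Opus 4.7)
Set $Y = \cap_{n \geq 1} \varphi^n(X)$. My plan is to sandwich $\mdim(X,\varphi)$ between $\mdim(Y,\varphi)$ and itself, by pushing everything through the non-wandering set $\Omega(X)$ and invoking the result of Gutman \cite{Gutman17} alluded to in the introduction. First I would verify that $Y$ is a closed $\varphi$-invariant subset of $X$ with $\varphi(Y) = Y$: the inclusion $\varphi(Y) \subseteq Y$ is immediate from $\varphi(\varphi^n(X)) = \varphi^{n+1}(X)$, and surjectivity follows by a standard compactness argument, since for each $y \in Y$ the sets $\varphi^n(X) \cap \varphi^{-1}(y)$ form a decreasing family of nonempty compact sets whose intersection produces a preimage inside $Y$. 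By monotonicity of mean dimension under closed invariant subsystems we obtain $\mdim(Y, \varphi) \leq \mdim(X, \varphi)$.

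For the reverse inequality, I would cite Gutman's theorem, which asserts that $\mdim(X,\varphi) = \mdim(\Omega(X), \varphi)$ for any dynamical system. It then suffices to show the set-theoretic inclusion $\Omega(X) \subseteq Y$, after which a second application of monotonicity gives
\[
\mdim(X, \varphi) \;=\; \mdim(\Omega(X), \varphi) \;\leq\; \mdim(Y, \varphi) \;\leq\; \mdim(X, \varphi),
\]
forcing equality throughout.

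The inclusion $\Omega(X) \subseteq Y$ is the heart of the argument, though it is essentially an unwinding of definitions. Suppose for contradiction that $x \in \Omega(X)$ but $x \notin \varphi^n(X)$ for some $n \geq 1$. Since $X$ is compact and $\varphi$ continuous, $\varphi^n(X)$ is closed, so there exists an open neighborhood $U$ of $x$ disjoint from $\varphi^n(X)$. Applying the non-wandering condition to $U$ with this $n$, I obtain $k \geq n$ and a point $y \in U \cap \varphi^{-k}(U)$. Then $\varphi^k(y) \in U$ while simultaneously $\varphi^k(y) \in \varphi^k(X) \subseteq \varphi^n(X)$, contradicting $U \cap \varphi^n(X) = \emptyset$. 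Hence $x$ must lie in every $\varphi^n(X)$, so $\Omega(X) \subseteq Y$.

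The only real obstacle is conceptual rather than technical: one must remember that the natural extension identifies the original system with its eventual image $Y$, and that the non-wandering set already lives inside $Y$ because non-wandering points are necessarily in the image of every iterate. Once Gutman's theorem is in hand, the remaining inputs are compactness, closedness of continuous images, and the two-line monotonicity property of $\mdim$ under closed invariant subsystems.
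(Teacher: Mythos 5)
Your proof is correct and follows essentially the same route as the paper: both reduce to Gutman's result that $\mdim(X,\varphi)=\mdim(\Omega(X),\varphi)$ and then sandwich via $X \supseteq \cap_{n\geq 1}\varphi^n(X) \supseteq \Omega(X)$ (you supply the details of the inclusion $\Omega(X)\subseteq \cap_{n\geq 1}\varphi^n(X)$, which the paper omits). The only point worth flagging is that Gutman's Lemma 7.2 is stated for homeomorphisms, so, as the paper notes, one should say that its argument carries over to the noninvertible case rather than cite it verbatim.
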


\begin{proof}
Following the same argument of \cite[Lemma 7.2]{Gutman17} for the noninvertible case, we still have $\mdim(X, \varphi)=\mdim(\Omega(X), \varphi)$. Since $ X \supseteq \cap_{n \geq 1} \varphi^n(X) \supseteq \Omega(X)$, it follows that
$$\mdim(X, \varphi)\geq \mdim(\cap_{n \geq 1} \varphi^n(X), \varphi) \geq \mdim(\Omega(X), \varphi)=\mdim(X, \varphi).$$
\end{proof}

Now we are ready to prove Theorem \ref{main thm}.

\begin{proof}[Proof of Theorem \ref{main thm}]

First we assume that $\varphi$ is surjective. From Proposition \ref{inverse decrease}, it suffices to show
$\mdim(X, \varphi) \leq \mdim(\tX, \tphi)$.

Consider a map $\pi \colon \overrightarrow{\hX} \to \widehat{\widetilde{X}}$ defined in the following way. For  each element $\overline{(\chi_n)_{n \in \bN}} \in \overrightarrow{\hX}$, $\pi(\overline{(\chi_n)_{n \in \bN}})$ maps each $(x_n)_{n \in \bN} \in \tX$ to $\sum_{n \in \bN}\chi_n(x_n)$. By definition, it is straightforward to check that $\pi$ is well-defined and injective.

 Note that
$\overrightarrow{\widehat{\varphi}}$ sends $\overline{(\chi_n)_{n\in \mathbb{N}}}$ to $\overline{(\chi_n\circ \varphi)_{n\in \mathbb{N}}}$ and for each $\xi \in \widehat{\widetilde{X}}$, $\widehat{\tphi}(\xi)$ sends each $(x_n)_{n\in \mathbb{N}} \in \widetilde{X}$ to $\xi((\varphi(x_n)_{n\in \mathbb{N}}))$. Thus $\pi \colon \overrightarrow{\hX} \to \widehat{\widetilde{X}}$ is an injective group homomorphism such that $\pi \circ \overrightarrow{\widehat{\varphi}} =\widehat{\tphi} \circ \pi$. It follows that  $\mrk(\overrightarrow{\hX}, \overrightarrow{\hphi}) \leq \mrk(\widehat{\widetilde{X}}, \widehat{\tphi})$.

 Since $\varphi$ is surjective, by Pontryagin duality,  $\widehat{\varphi}$ is injective. By Proposition \ref{mrk reduction}, we obtain $\mrk(\hX, \hphi) =\mrk(\overrightarrow{\hX}, \overrightarrow{\hphi})$.
 Applying Theorem \ref{dual equality} twice we have
 \begin{align*}
     \mdim(X, \varphi) &=\mrk(\hX, \hphi) =\mrk(\overrightarrow{\hX}, \overrightarrow{\hphi}) \leq \mrk(\widehat{\widetilde{X}}, \widehat{\tphi})=\mdim(\tX, \tphi).
 \end{align*}
This completes the proof in the case when $\varphi$ is surjective.

 Now we assume that $\varphi$ is any continuous endomorphism.
 Denote by $Y$ the subspace $\cap_{n \geq 1} \varphi^n(X)$. Since the restriction of $\varphi$ on $Y$ is surjective, we have $\mdim(\widetilde{Y}, \widetilde{\varphi})=\mdim(Y, \varphi)$. By Lemma \ref{nonsurjective reduction} and the definition of $(\widetilde{X}, \widetilde{\varphi})$, we obtain that
 $$\mdim(X, \varphi)=\mdim(Y, \varphi)=\mdim(\widetilde{Y}, \tphi)=\mdim(\tX, \tphi).$$
\end{proof}

Next, we prove Corollary \ref{CA application}.
\begin{proof}[Proof of  Corollary \ref{CA application}.]
Since $f$ is a continuous homomorphism between compact metric abelian groups, by \cite[Lemma 7.1]{BS21},  $F$ is also a continuous homomorphism. Applying Theorem \ref{main thm} to the induced dynamical system $(X^\bZ, F)$, we have  $\mdim(\widetilde{X^\bZ}, \widetilde{F})=\mdim(X^\bZ, F)$.
\end{proof}

Finally, by a modification of the proof of Theorem \ref{main thm}, we obtain an inverse limit version of Theorem \ref{main thm} which is interesting in its generality.

\begin{theorem}
Let  $(\varprojlim X_n, \Phi)$ be an inverse limit of dynamical systems $(X_n, \varphi_n)$'s such that $X_n$ is a compact metrizable abelian group and $\varphi_n, \psi_n$ are group homorphisms for every $n \in \bN$.  Then we have
$$\mdim(\varprojlim X_n, \Phi)=\lim_{n \to \infty} \mdim(X_n, \varphi_n)=\sup_{n \in \bN} \mdim(X_n, \varphi_n).$$
\end{theorem}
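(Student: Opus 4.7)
The plan is to parallel the proof of Theorem \ref{main thm}: transfer the problem via Pontryagin duality and Theorem \ref{dual equality} to a mean-rank computation for a direct system of discrete abelian groups, to which Proposition \ref{mrk reduction} applies.

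First I would handle the easy direction and the $\lim = \sup$ claim simultaneously. Since each bonding map $\psi_n \colon X_{n+1} \to X_n$ is a surjective continuous group homomorphism intertwining $\varphi_{n+1}$ and $\varphi_n$, it is a topological factor map of dynamical systems, and mean dimension is monotone under factors. Hence $\mdim(X_n, \varphi_n) \leq \mdim(X_{n+1}, \varphi_{n+1})$, so the sequence is nondecreasing and $\lim_n \mdim(X_n, \varphi_n) = \sup_n \mdim(X_n, \varphi_n)$. Proposition \ref{inverse decrease} then yields
$$\mdim(\varprojlim X_n, \Phi) \leq \liminf_n \mdim(X_n, \varphi_n) = \sup_n \mdim(X_n, \varphi_n).$$

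For the reverse inequality, I would dualize. Standard Pontryagin duality provides a natural isomorphism of discrete abelian groups
$$\widehat{\varprojlim X_n} \;\cong\; \varinjlim \hat{X_n},$$
where the direct system consists of the duals $\hat{\psi_{n,m}} \colon \hat{X_n} \to \hat{X_m}$ for $n \leq m$; since each $\psi_{m,n}$ is surjective, each $\hat{\psi_{n,m}}$ is injective. Under this identification, the dual endomorphism $\widehat{\Phi}$ corresponds to the endomorphism of $\varinjlim \hat{X_n}$ induced by the compatible family $\{\hat{\varphi_n}\}$. Now Proposition \ref{mrk reduction} applies, and because all transition maps $\hat{\psi_{n,m}}$ are injective we have $A_n' = \hat{X_n}$ and $\varphi_n' = \hat{\varphi_n}$, giving
$$\mrk\bigl(\varinjlim \hat{X_n},\, \widehat{\Phi}\bigr) = \sup_n \mrk(\hat{X_n}, \hat{\varphi_n}).$$
Applying Theorem \ref{dual equality} to each $(X_n, \varphi_n)$ and to $(\varprojlim X_n, \Phi)$ then yields
$$\mdim(\varprojlim X_n, \Phi) = \sup_n \mdim(X_n, \varphi_n),$$
which combined with the opposite inequality above finishes the proof.

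The main technical point I expect is verifying the Pontryagin duality step: that $\widehat{\varprojlim X_n}$ is naturally isomorphic to $\varinjlim \hat{X_n}$ as discrete abelian groups and that the isomorphism intertwines $\widehat{\Phi}$ with the limit endomorphism. This relies on the classical fact that every continuous character on $\varprojlim X_n$ factors through some projection $\pi_n \colon \varprojlim X_m \to X_n$, which follows because $\bT$ has no small subgroups while the kernels of the projections $\pi_n$ form a neighborhood basis of the identity in $\varprojlim X_m$. Once this structural identification is in place, the intertwining with $\widehat{\Phi}$ is a direct verification from the definitions of $\Phi$ and of the induced map on the direct limit.
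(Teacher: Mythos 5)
Your overall route is the paper's route: dualize, identify (or embed) $\varinjlim \widehat{X_n}$ in $\widehat{\varprojlim X_n}$, apply Proposition \ref{mrk reduction} with injective transition maps, and convert back with Theorem \ref{dual equality}, using Proposition \ref{inverse decrease} for the upper bound. Your duality step is fine and in fact slightly stronger than needed: the paper only uses the injective homomorphism $\varinjlim \widehat{X_n} \to \widehat{\varprojlim X_n}$ (as in the proof of Theorem \ref{main thm}), which already gives $\mrk(\varinjlim \widehat{X_n}, \Psi) \leq \mrk(\widehat{\varprojlim X_n}, \widehat{\Phi})$, rather than the full isomorphism via the no-small-subgroups argument.

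There is, however, one genuine error: the opening claim that ``mean dimension is monotone under factors,'' which you use to conclude that $\mdim(X_n,\varphi_n)$ is nondecreasing. Mean topological dimension is \emph{not} monotone under factor maps. A standard counterexample: a Peano surjection $g\colon [0,1]\to[0,1]^2$ induces a factor map of shifts $[0,1]^{\bZ}\to([0,1]^2)^{\bZ}$, yet the factor has mean dimension $2$ while the extension has mean dimension $1$. Indeed, the phenomenon motivating this very paper contradicts your claim: $(\cap_{n}\varphi^n(X),\varphi)$ is a factor of $(\tX,\tphi)$ via the first-coordinate projection, and Burguet--Shi show $\mdim(\tX,\tphi)$ can be strictly smaller than $\mdim(X,\varphi)=\mdim(\cap_n\varphi^n(X),\varphi)$. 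The step is repairable in two ways. Either observe that in the algebraic category monotonicity \emph{does} hold, because by Theorem \ref{dual equality} it amounts to $\mrk(\widehat{X_n},\widehat{\varphi_n})\leq \mrk(\widehat{X_{n+1}},\widehat{\varphi_{n+1}})$, which follows from the injectivity of $\widehat{\psi_n}$ and the definition of mean rank as a supremum over finite subsets. Or, more simply, drop the claim entirely: once you have $\sup_n \mdim(X_n,\varphi_n)\leq \mdim(\varprojlim X_n,\Phi)$ from the duality argument and $\mdim(\varprojlim X_n,\Phi)\leq \liminf_n \mdim(X_n,\varphi_n)$ from Proposition \ref{inverse decrease}, the chain $\sup_n \leq \liminf_n \leq \sup_n$ forces all quantities to coincide, so the limit exists and equals the supremum without any a priori monotonicity.
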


\begin{proof}
Since every $\psi_n$ is surjective, by Pontryagin duality, $\widehat{\psi_n}$ is injective. Denote by $\Psi$ the corresponding map on the colimit $\varinjlim \widehat{X_n}$. By Proposition \ref{mrk reduction}, we have
$$\mrk(\varinjlim \widehat{X_n}, \Psi) =\lim_{n \to \infty}\mrk(\widehat{X_n}, \widehat{\varphi_n})=\sup_{n \in \bN} \mrk(\widehat{X_n}, \widehat{\varphi_n}).$$
By Proposition \ref{inverse decrease}, it suffices to prove that $\sup_{n \in \bN}\mdim(X_n, \varphi_n) \leq \mdim(\varprojlim X_n, \Phi)$.
Applying Theorem \ref{dual equality} twice, we obtain

\begin{align*}
&\sup_{n \in \bN} \mdim(X_n, \varphi_n) \\
&=\sup_{ n \in \bN} \mrk(\widehat{X_n}, \widehat{\varphi_n}) \\
&=\mrk(\varinjlim \widehat{X_n}, \Psi) \\
&\leq \mrk(\widehat{\varprojlim X_n}, \widehat{\Phi}) =\mdim(\varprojlim X_n, \Phi).
\end{align*}

\end{proof}

\medskip
\noindent{\it Acknowledgments.}  We are grateful for the careful reading of the anonymous referee. B. L. is supported by NSFC grant 12271387. R.S
was partially supported by Fondation Sciences Math\'{e}matiques de Paris.


  \end{document}